\documentclass[11pt,a4paper]{amsart}

\usepackage[margin=1.25in,a4paper]{geometry}
\usepackage{lmodern}
\usepackage[utf8]{inputenc}
\usepackage[T1]{fontenc}

\usepackage{microtype}
\usepackage[shortlabels]{enumitem}
\usepackage{svg}

\usepackage[english]{babel}
\usepackage{mathrsfs}
\usepackage{amssymb}

\usepackage{xcolor}
\definecolor{linkcol}{RGB}{0,80,158}
\definecolor{citecol}{RGB}{46,117,120}

\usepackage[colorlinks]{hyperref}
\hypersetup{
    colorlinks=true,
    linkcolor=linkcol,
    urlcolor=linkcol,
    citecolor=citecol,
}
\usepackage[hyperpageref]{backref}
\usepackage[msc-links,alphabetic]{amsrefs}
\usepackage[capitalize,nameinlink,noabbrev]{cleveref}
\crefname{equation}{}{}
\numberwithin{equation}{section}

\theoremstyle{plain}
\newtheorem{theorem}{Theorem}
\newtheorem{lemma}[theorem]{Lemma}

\theoremstyle{definition}

\theoremstyle{remark}
\newtheorem*{remark}{Remark}

\newcommand{\bC}{\mathbb{C}}

\newcommand{\bQ}{\mathbb{Q}}

\newcommand{\bT}{\mathbb{T}}
\newcommand{\bZ}{\mathbb{Z}}

\newcommand{\sH}{\mathscr{H}}

\newcommand{\abs}[1]{\lvert#1\rvert}

\newcommand{\norm}[1]{\lVert#1\rVert}

\renewcommand{\Re}{\operatorname{Re}}
\renewcommand{\Im}{\operatorname{Im}}
\newcommand{\dd}{\mathrm{d}}

\title{On the asymptotic growth of the norm of the partial sum operator on $\sH^1$}

\author{Viktor Andersson}
\address{Department of Mathematical Sciences, Norwegian University of Science and Technology (NTNU), 7491 Trondheim, Norway}
\email{viktor.andersson@ntnu.no}
\date{\today}

\begin{document}
\begin{abstract}
We show that the norm of the partial sum operator $S_N$ on the Hardy space $\sH^1$ of Dirichlet series satisfies
$$\liminf_{N\to\infty}\frac{\log\log N}{\log N}\norm{S_N}_{\sH^1\to\sH^1}\geq\frac{1-\log 2}{2\pi}.$$
Combined with an upper bound of Bondarenko, Brevig, Saksman and Seip, this shows that
$$\norm{S_N}_{\sH^1\to\sH^1}\asymp\frac{\log N}{\log\log N}$$
as $N\to\infty$.
\end{abstract}
\thanks{The author is supported by Grant 354537 of the Research Council of Norway.}
\maketitle

\section{Introduction}\label{section:introduction}

For $0<p<\infty$, the Hardy space of Dirichlet series $\sH^p$ is defined as the completion of the set of Dirichlet polynomials $f(s)=\sum_{n=1}^N a_nn^{-s}$ in the norm (or quasi-norm when $0<p<1$)
$$\norm{f}_{\sH^p}=\left(\lim_{T\to\infty}\frac{1}{2T}\int_{-T}^T\abs{f(it)}^p\,\dd t\right)^{1/p},$$
and for $p=\infty$, the space $\sH^\infty$ is defined as the space of bounded analytic functions $f$ in the open right half-plane $\Re s>0$ that can be represented by a convergent Dirichlet series in some right half-plane, equipped with the supremum norm. The elements of $\sH^p$ are convergent Dirichlet series in the half-plane $\Re s>1/2$ for $0<p<\infty$ and $\Re s>0$ for $p=\infty$, and as such it is natural to consider the partial sum operator $S_N$, defined by mapping
$$S_N:\sum_{n=1}^\infty a_nn^{-s}\mapsto\sum_{n=1}^Na_nn^{-s},$$
on these spaces. Although the norm of these operators is difficult to compute explicitly unless $p=2$, an interesting line of research has been to determine the asymptotics of $\norm{S_N}_{\sH^p\to\sH^p}$ as $N\to\infty$ for different values of $0<p\leq\infty$. In the range $1<p<\infty$, Konyagin, Queffélec, Saksman and Seip \cite{konyagin_riesz_2022}*{Theorem 5.1} showed that
$$\sup_{N\geq1}\norm{S_N}_{\sH^p\to\sH^p}=\frac{1}{\sin(\pi/p)},\quad 1<p<\infty,$$
which corresponds to the norm of the one-dimensional Riesz projection $P^+:L^p(\bT)\to H^p(\bT)$. In the quasi-Banach range $0<p<1$, it was shown by Bondarenko, Brevig, Saksman and Seip \cite{bondarenko_linear_2019}*{Theorem 5.3} that
\begin{equation}\label{eq:norm-asymptotic-quasi-range}
\log\norm{S_N}_{\sH^p\to\sH^p}\asymp\frac{\log N}{\log\log N},\quad 0<p<1
\end{equation}
as $N\to\infty$. Recently, the asymptotic order for $p=\infty$ was also settled: it holds that
$$\norm{S_N}_{\sH^\infty\to\sH^\infty}\asymp\log N$$
as $N\to\infty$. The upper bound was established already in 2006 by Balasubramanian, Calado and Queffélec \cite{balasubramanian_bohr_2006}*{Lemma 1.1} using an effective version of Perron's formula, and the lower bound was established in 2026 by Chen and Fu \cite{chen_note_2026}*{Theorem 1.1}. In the Banach space range $1\leq p\leq\infty$, this leaves only the case of $p=1$, and to the author's knowledge, the best known bounds are from 2019 due to Bondarenko, Brevig, Saksman and Seip \cite{bondarenko_linear_2019}*{Theorem 5.2}, and say that
$$\log\log N\lesssim\norm{S_N}_{\sH^1\to\sH^1}\lesssim\frac{\log N}{\log\log N}$$
as $N\to\infty$. The goal of this short paper is to show that the upper bound gives the correct asymptotic order by showing the corresponding lower bound, thus settling the last remaining asymptotic order for the partial sum on the Hardy spaces of Dirichlet series in the Banach space range.

\begin{theorem}\label{theorem:partial-sum}
    It holds that
    $$\liminf_{N\to\infty}\frac{\log\log N}{\log N}\norm{S_N}_{\sH^1\to\sH^1}\geq\frac{1-\log 2}{2\pi}.$$
    Consequently,
    $$\norm{S_N}_{\sH^1\to\sH^1}\asymp\frac{\log N}{\log\log N}$$
    as $N\to\infty$.
\end{theorem}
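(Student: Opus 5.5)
The plan is to prove the lower bound by duality and to deduce the $\asymp$ statement from the upper bound of Bondarenko, Brevig, Saksman and Seip quoted in \cref{section:introduction}. Via the Bohr lift, $\sH^1$ is identified with $H^1(\bT^\infty)$: the prime $p_j$ corresponds to the variable $z_j$. For every $g$ with $\norm{g}_{\sH^\infty}\le 1$ and every $f$ in $\sH^1$, the pairing gives
$$\norm{S_N}_{\sH^1\to\sH^1}\ \geq\ \frac{\abs{\langle S_Nf,g\rangle}}{\norm{f}_{\sH^1}}\ =\ \frac{\bigl|\sum_{n\le N}a_n\overline{b_n}\bigr|}{\norm{f}_{\sH^1}} .$$
So we need a pair $(f,g)$ of this kind whose truncated pairing is of order $\log N/\log\log N$.

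\textbf{Choice of scales and of $f$.} Fix a large $x$. Let $p_1,\dots,p_k$ be the primes in $(x,2x]$, so $k\sim x/\log x$. For the test function take
$$f(s)=\prod_{j=1}^k\frac{(1+p_j^{-s})^2}{2} .$$
Its lift is $\prod_j (1+z_j)^2/2$, and since $\norm{(1+z)^2}_{H^1(\bT)}=\norm{1+z}_{H^2(\bT)}^2=2$, we get $\norm{f}_{\sH^1}=1$ exactly. Its coefficients are $a_n=2^{-k}\prod_j\binom{2}{\alpha_j}$ for $n=\prod p_j^{\alpha_j}$ with $\alpha_j\in\{0,1,2\}$.

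For such $n$ we have $\log n=\sum_j\alpha_j\log p_j$, so the terms with $\sum_j\alpha_j=m$ lie in the window $m\log x\le \log n\le m\log 2x$. Choose $N$ with $\log N\approx m\log x+\tfrac{m}{2}\log 2$ for an $m$ comparable to $k$. The cutoff $n\le N$ then slices through the middle of the level-$m$ and neighbouring levels. Since $\log N\asymp x$ here, this gives $k\asymp\log N/\log\log N$.

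\textbf{Choice of $g$.} I would take $g$ to be a product over the same primes, $g(s)=\prod_j\phi(p_j^{-s})$, where $\phi$ is a bounded analytic function on the disc with $\abs{\phi}\le 1$. One natural candidate is a conformal map onto a half-strip, or a normalised $\exp$ of a Herglotz-type function. It should be chosen so that $\sum_{n\le N}a_n\overline{b_n}$ becomes a one-dimensional Riesz-projection quantity in the ``tilt'' variable $\sum_j\alpha_j\log(p_j/x)$. The computation then reduces to a sum of roughly independent contributions, one per prime.

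Heuristically, each prime contributes a gain of order $\frac{1}{2\pi}\int\frac{\dd u}{u}$ over the scale range $\log(p_j/x)\in(0,\log 2]$. Summing these via the prime number theorem in $(x,2x]$ should produce the constant $\frac{1-\log 2}{2\pi}$ times $k\sim x/\log x\sim\log N/\log\log N$.

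\textbf{Main obstacle.} The hard step is the estimate of the truncated pairing. We must show that the cutoff $\log n\le\log N$ interacts with the product structure so that the gain is linear in $k$ rather than merely $\log k$. A naive one-variable reduction, treating all $p_j$ as equal to $x$, only gives the Riesz-projection bound $\log k$, which is the old $\log\log N$. What must be exploited is the spread of $\log p_j$ across $[\log x,\log 2x]$.

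My first approach would be to write the indicator $1_{\log n\le\log N}$ by a Perron-type integral $\frac{1}{2\pi i}\int \frac{N^{w}n^{-w}}{w}\,\dd w$. The pairing then factorises as
$$\frac{1}{2\pi i}\int\prod_j\Phi_j(w)\,\frac{N^w}{w}\,\dd w ,$$
and I would evaluate this by a saddle-point or stationary-phase argument. The $\log 2$ is expected to come from $\sum_{x<p\le 2x}\frac{\log(p/x)}{\log p}$-type sums.

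If the product choice of $g$ is too rigid, a fallback is to let $g$ depend on the primes through a Rademacher- or Steinhaus-randomised sum. One would then average the pairing over the randomisation, and use the fact that $\norm{S_N}$ dominates the average.
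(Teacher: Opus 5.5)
Your proposal has a genuine gap. The only step with real content is a lower bound of order $k$ for the truncated pairing $\sum_{n\le N}a_n\overline{b_n}$. That step is described only heuristically, and $g$ is never actually chosen. Nothing supports the per-prime gain $\frac{1}{2\pi}\int\dd u/u$, the saddle-point evaluation, or the idea that $\log 2$ comes from sums such as $\sum_{x<p\le 2x}\log(p/x)/\log p$. In fact $1-\log 2=\frac{1}{2\pi}\int_0^{2\pi}(1+\cos\theta)\log(1+\cos\theta)\,\dd\theta$ is the entropy of one factor of $\abs{f}$, and it does not depend on which primes are used. Linear growth comes from the multiplicative independence of the primes (the truncation is a half-space cut of the exponent lattice $\bZ^k$), not from the numerical spread of the $\log p_j$; the paper simply takes the first $d$ primes.

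There is also reason to doubt that a product $g=\prod_j\phi_j(p_j^{-s})$ can work at all. Consider a toy model in which the $\log p_j$ are replaced by super-increasing frequencies, so that the cut at the centre $\prod_j p_j$ is lexicographic. The truncation of $f$ is then $z_1\cdots z_k\bigl(1+\frac12\sum_j\bar z_j\prod_{i<j}(1+\Re z_i)\bigr)$, whose $L^1$ norm is $\asymp k$. Now write $h_j=\bar z_j\phi_j(z_j)$ and $x_j=\abs{\widehat{h_j}(-1)}^2$. Parseval and Cauchy--Schwarz bound the pairing of this truncation with any such $g$ by $1+\frac12\sum_j\sqrt{x_j}\prod_{i>j}\sqrt{1-x_i}\le 1+\frac{\sqrt k}{2}$, because $\sum_j x_j\prod_{i>j}(1-x_i)=1-\prod_i(1-x_i)\le 1$. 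So in that model product dual functions lose a factor $\sqrt k$, and you offer no non-product alternative.

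The missing idea is to cut exactly at the centre of symmetry and to replace the explicit dual function by an $L\log L$ inequality of Helson type. Take $N_d=p_1\cdots p_d$ and $f_d=\prod_{j\le d}(1+\chi_j)^2/2=\chi(N_d)R_d$, where $R_d=\prod_{j\le d}(1+\Re\chi_j)$. Every $n$ in the spectrum of $f_d$ has the form $n=N_dq$ with $q=\prod_j p_j^{\epsilon_j}$, $\epsilon_j\in\{-1,0,1\}$, and $n\le N_d$ if and only if $q\le 1$. Hence $T_d=2\overline{\chi(N_d)}S_{N_d}f_d-1$ has $\Re T_d=R_d$ and $\widehat{T_d}(q)=0$ for $q>1$. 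The spectrum of $T_d-1$ lies in the multiplicative semigroup $(0,1)\cap\bQ^+$, so the mean of $(T_d+\varepsilon)\log(T_d+\varepsilon)$ is $(1+\varepsilon)\log(1+\varepsilon)$. Taking real parts and using $\abs{\arg(T_d+\varepsilon)}<\pi/2$ gives $\int R_d\log R_d\,\dd m_\infty\le\frac{\pi}{2}\norm{\Im T_d}_{L^1}$. Additivity of entropy over the product then yields $\norm{S_{N_d}f_d}_{L^1}\ge\frac12\norm{\Im T_d}_{L^1}\ge\frac{1-\log 2}{\pi}d$.

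Finally, your plan only addresses special values of $N$, while the $\liminf$ is over all $N$. The paper handles general $N$ by testing on $\chi(\lfloor N/N_{d_0}\rfloor)f_{d_0}$ with $\log N_{d_0}\sim\frac12\log N$, which turns $S_N$ into $S_{N_{d_0}}$. This step is exactly where the factor $\frac12$ in $\frac{1-\log 2}{2\pi}$ comes from. A centred cut at your special $N$ would give $\frac{1-\log 2}{\pi}k$, so the constant in your heuristic looks fitted to the statement rather than derived.
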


The idea of the proof is similar to the idea used by Bondarenko, Brevig, Saksman and Seip to prove the lower bound in \cref{eq:norm-asymptotic-quasi-range}, namely to first establish a lower bound on the norm $\norm{S_N}_{\sH^1\to\sH^1}$ when $N=p_1\cdots p_d$ is the product of the first $d$ primes by testing on suitable functions, and then extending this to all $N$ by appropriately multiplying by well-chosen Dirichlet monomials. Where our argument differs is that to establish the lower bound for $N=p_1\cdots p_d$, we employ a simple modification of an inequality of Helson \cite{helson_conjugate_1959}.

\section{The proof}

Let us start by reminding the reader of the connection between the spaces $\sH^p$ and the Hardy spaces of the infinite-dimensional torus $H^p(\bT^\infty)$, as this will play a central role in the proof. For this we will restrict ourselves to the Banach space range $1\leq p\leq\infty$. The infinite-dimensional torus $\bT^\infty=\bT\times\bT\times\cdots$ is a compact abelian group, and so it has a unique normalized Haar measure $m_\infty$, which is the product measure of the normalized arclength measure $m$ on each circle $\bT$. We can identify the dual group of $\bT^\infty$ with the positive rationals $\bQ^+$ by identifying $q\in\bQ^+$ with the map
$$\chi\mapsto\chi(q)=\chi_1^{k_1}\cdots\chi_n^{k_n}$$
with $\chi=(\chi_1,\chi_2,\dots)\in\bT^\infty$, where $q=p_1^{k_1}\cdots p_n^{k_n}$ is the unique prime factorization of $q$ and $p_1,p_2,\dots$ the increasing sequence of primes. In particular, any $f\in L^p(\bT^\infty)$ can be identified with its Fourier series
$$f(\chi)\sim\sum_{q\in\bQ^+}\widehat f(q)\chi(q),$$
where
$$\widehat f(q)=\int_{\bT^\infty}f(\chi)\overline{\chi(q)}\,\dd m_\infty(\chi)$$
are the Fourier coefficients of $f$. The Hardy spaces of $\bT^\infty$ are then defined as
$$H^p(\bT^\infty)=\{f\in L^p(\bT^\infty):\widehat f(q)=0\text{ for all }q\in\bQ^+\setminus\bZ^+\}.$$
We can identify $\sH^p$ with $H^p(\bT^\infty)$ through the \emph{Bohr lift}, which is the isometric isomorphism that maps the Dirichlet series
$$f(s)=\sum_{n=1}^\infty a_nn^{-s}$$
in $\sH^p$ to the function $f^*\in H^p(\bT^\infty)$ with Fourier series
$$f^*(\chi)\sim\sum_{n=1}^\infty a_n\chi(n).$$
The reader is referred to \cite{queffelec_diophantine_2020}*{Chapter 6} for more details. We say that a function $T$ is a \emph{trigonometric polynomial} on $\bT^\infty$ if it is of the form
$$T(\chi)=\sum_{q\in Q}a_q\chi(q)$$
for some finite set $Q\subseteq\bQ^+$, and we say that it is an \emph{analytic polynomial} on $\bT^\infty$ if, in addition, $Q\subseteq\bZ^+$. Under the identification through the Bohr lift, we will instead work with the partial sum on $H^1(\bT^\infty)$ in our proof, which then takes $f\in H^1(\bT^\infty)$ to the analytic polynomial
$$S_Nf(\chi)=\sum_{n=1}^N\widehat f(n)\chi(n).$$

We are now ready to start tackling the proof. For this, we will need the following small modification of a theorem of Helson \cite{helson_conjugate_1959}*{Theorem 2}.

\begin{lemma}\label{lemma:entropy-inequality}
    Let $T$ be a trigonometric polynomial on $\bT^\infty$ with $\widehat T(q)=0$ for all rational numbers $q>1$, and suppose that $R=\Re T\geq0$. Then
    $$\int_{\bT^\infty}R(\chi)\log R(\chi)\,\dd m_\infty(\chi)\leq\widehat R(1)\log\widehat R(1)+\frac{\pi}{2}\norm{\Im T}_{L^1(\bT^\infty)}$$
    with the convention that $0\log 0=0$.
\end{lemma}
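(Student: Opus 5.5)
The plan is to adapt Helson's classical argument: apply the identity ``mean of an analytic function equals its value at the origin'' to $G\log G$, where $G=\overline{T}$, and then take real parts. Since $\widehat T(q)=0$ for $q>1$, the conjugate $F=\overline T$ has spectrum in the semigroup $\Gamma=\{q\in\bQ^+:q\geq1\}$. Note that $\Re F=R$ and $\Im F=-\Im T$, so $\norm{\Im F}_{L^1}=\norm{\Im T}_{L^1}$. Write $u=R$ and $v=\Im F$.

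\textbf{Step 1 (multiplicativity of the mean).} If $q_1,\dots,q_k\in\Gamma$ and $q_1\cdots q_k=1$, then all $q_j=1$. Hence for any trigonometric polynomial $G$ with spectrum in $\Gamma$,
$$\int_{\bT^\infty}G^k\,\dd m_\infty=\widehat G(1)^k,$$
and so $\int P(G)\,\dd m_\infty=P(\widehat G(1))$ for every polynomial $P$.

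\textbf{Step 2 (passing to the logarithm).} Fix $\varepsilon>0$ and set $G=F+\varepsilon$. Then $\Re G\geq\varepsilon$, and since $G$ is continuous on the compact space $\bT^\infty$, its range lies in a compact convex set $K\subset\{\Re z\geq\varepsilon\}$. The function $z\log z$ (principal branch) is holomorphic near $K$, and $\bC\setminus K$ is connected. By Runge's theorem it is a uniform limit on $K$ of polynomials. Note that $\widehat G(1)=\int G\,\dd m_\infty$ also lies in $K$. Step 1 then gives
$$\int_{\bT^\infty}G\log G\,\dd m_\infty=c\log c,\qquad c=\widehat G(1)=a+ib,$$
with $a=\widehat R(1)+\varepsilon$ and $b=\int v\,\dd m_\infty$.

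\textbf{Step 3 (real parts).} Taking real parts gives
$$\int(u+\varepsilon)\log\abs{G}\,\dd m_\infty-\int v\arg G\,\dd m_\infty=a\log\abs c-b\arg c.$$
Use $\log\abs G\geq\log(u+\varepsilon)$ on the left and $\abs{\arg G}<\pi/2$ in the second integral. This yields
$$\int(u+\varepsilon)\log(u+\varepsilon)\,\dd m_\infty\leq a\log\abs c-b\arg c+\frac{\pi}{2}\norm{v}_{L^1}.$$
To replace the constant term by $a\log a$, put $x=b/a$. Then
$$a\log\abs c-b\arg c-a\log a=a\Bigl(\tfrac12\log(1+x^2)-x\arctan x\Bigr)\leq0,$$
because $x\arctan x\geq\frac12\log(1+x^2)$. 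This last inequality holds since both sides vanish at $x=0$, both are even, and for $x>0$ the derivatives satisfy $\arctan x+\frac{x}{1+x^2}\geq\frac{x}{1+x^2}$.

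\textbf{Step 4 ($\varepsilon\to0$).} Finally let $\varepsilon\to0$. The function $u$ is bounded and $t\log t$ is continuous on $[0,\infty)$ with the convention $0\log0=0$. So the left side converges to $\int R\log R\,\dd m_\infty$ by bounded convergence, and $a\log a\to\widehat R(1)\log\widehat R(1)$. This gives the claimed inequality.

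The main obstacle I anticipate is Step 2: justifying the ``analytic'' functional calculus on $\bT^\infty$, where there is no holomorphic extension to a disc. The Runge approximation on the convex range combined with the semigroup property in Step 1 is the route I would take. A secondary point is the possibly nonzero $\Im\widehat T(1)$, which Step 3 handles via the elementary inequality above.
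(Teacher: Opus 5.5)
Your proposal is correct and follows the paper's proof essentially step for step: the identity $\int P(G)\,\dd m_\infty=P(\widehat G(1))$ from the semigroup structure of the spectrum, Runge approximation of $z\log z$ on a compact convex set in the right half-plane, taking real parts using $\abs{G}\geq\Re G$ and $\abs{\arg G}<\pi/2$, the inequality $\Re(z\log z)\leq(\Re z)\log(\Re z)$, and finally $\varepsilon\downarrow0$. The only differences are cosmetic: you pass to $\overline T$ (spectrum in $[1,\infty)$), which is unnecessary because the spectrum $(0,1]\cap\bQ^+$ of $T$ itself is equally a semigroup in which $1$ is the only invertible element, and you prove the elementary inequality explicitly where the paper simply quotes it.
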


\begin{proof}
    The Fourier coefficients of the trigonometric polynomial $T-\widehat T(1)$ are supported in $(0,1)\cap\bQ^+$ by assumption, and so, as $(0,1)\cap\bQ^+$ is closed under multiplication, the same holds for the Fourier coefficients of $(T-\widehat T(1))^n$ for all positive integers $n$. From this, together with the binomial theorem, one readily deduces that
    $$\int_{\bT^\infty}p(T(\chi))\,\dd m_\infty(\chi)=p(\widehat T(1))$$
    for all polynomials $p:\bC\to\bC$. By applying Runge's approximation theorem (e.g., \cite{conway_functions_1978}*{Chapter VIII, Theorem 1.7}) to the analytic function $z\mapsto (z+\varepsilon)\log(z+\varepsilon)$ in a compact convex subset of the closed right half-plane containing the image of $T$ and $\widehat T(1)$, it then also follows that
    $$\int_{\bT^\infty}(T(\chi)+\varepsilon)\log(T(\chi)+\varepsilon)\,\dd m_\infty(\chi)=(\widehat T(1)+\varepsilon)\log(\widehat T(1)+\varepsilon)$$
    for all $\varepsilon>0$. Taking real parts, using that $\abs{T+\varepsilon}\geq R+\varepsilon$, that $\abs{\arg (T(\chi)+\varepsilon)}<\pi/2$, and the inequality $\Re(z\log z)\leq(\Re z)\log(\Re z)$, it follows that
    $$\int_{\bT^\infty}(R(\chi)+\varepsilon)\log(R(\chi)+\varepsilon)\,\dd m_\infty(\chi)\leq(\widehat R(1)+\varepsilon)\log(\widehat R(1)+\varepsilon)+\frac{\pi}{2}\norm{\Im T}_{L^1(\bT^\infty)}.$$
    The result follows by letting $\varepsilon\downarrow0$.
\end{proof}

\begin{remark}
The above lemma and the result of Helson can be seen as a quantitative and infinite-dimensional version of a result originally due to M. Riesz on conjugate functions on the circle (see, for example, \cite{garnett_bounded_1981}*{Chapter 3, Exercise 7} or \cite{zygmund_trigonometric_1968}*{Chapter VII, Theorem 2.10}). One can also easily extend the lemma to the setting of compact abelian groups with ordered duals with essentially the same proof; compare with \cite{rudin_fourier_1962}*{Chapter 8}. Viewed from this perspective, the importance of our assumption that $\widehat T(q)=0$ for all rational numbers $q>1$ comes from ordering $\bQ^+$ by $-\log q$, in which case $\bQ^+\cap(1,\infty)$ corresponds to the negative Fourier spectrum.
\end{remark}

With this we can now prove \cref{theorem:partial-sum}.

\begin{proof}[Proof of \cref{theorem:partial-sum}]
    Consider the sequence $\{f_d\}_{d\in\bZ^+}$ of analytic polynomials on $\bT^\infty$ defined by
    $$f_d(\chi)=\prod_{j=1}^d\frac{(1+\chi_j)^2}{2}=\chi_1\cdots\chi_dR_d(\chi),$$
    where
    $$R_d(\chi)=\prod_{j=1}^d(1+\Re\chi_j).$$
    One easily sees that $\norm{f_d}_{L^1(\bT^\infty)}=1$. Write $N_d=p_1\cdots p_d$, and note that
    \begin{equation}\label{eq:n-asymptotic}
        d=(1+o(1))\frac{\log N_d}{\log\log N_d}
    \end{equation}
    as $d\to\infty$ by the prime number theorem (see, e.g., \cite{montgomery_multiplicative_2007}*{proof of Theorem 2.10} for the details) and that $\chi(N_d)=\chi_1\cdots\chi_d$. By expanding $f_d$ in terms of the Fourier coefficients of $R_d$, we see also that
    $$S_{N_d}f_d(\chi)=\chi(N_d)\sum_{q\in\bQ^+\cap(0,1]}\widehat R_d(q)\chi(q).$$
    We therefore consider the trigonometric polynomial
    $$T_d(\chi)=2\overline{\chi(N_d)}S_{N_d}f_d(\chi)-1=1+2\sum_{q\in\bQ^+\cap(0,1)}\widehat R_d(q)\chi(q).$$
    Using that $\overline{\widehat R_d(q)}=\widehat R_d(1/q)$ since $R_d$ is real-valued, that $\widehat R_d(1)=1$, and that $\overline{\chi(q)}=\chi(1/q)$, one readily uses this to compute
    $$\Re T_d(\chi)=\sum_{q\in\bQ^+}\widehat R_d(q)\chi(q)=R_d(\chi).$$
    Using that $\abs{\chi(N_d)}=1$ and \cref{lemma:entropy-inequality}, we can then estimate
    $$\norm{S_{N_d}f_d}_{L^1(\bT^\infty)}\geq\frac{1}{2}\norm{\Im T_d}_{L^1(\bT^\infty)}\geq\frac{1}{\pi}\int_{\bT^\infty}R_d(\chi)\log R_d(\chi)\,\dd m_\infty(\chi)=\frac{1-\log2}{\pi}d,$$
    where the last equality follows from a simple computation using Fubini's theorem and the identity
    $$\frac{1}{2\pi}\int_0^{2\pi}(1+\cos\theta)\log(1+\cos\theta)\,\dd\theta=1-\log2.$$
    Fix now an arbitrary integer $N\geq6$, and set
    $$d_0=\max\{d\in\bZ^+:N/N_d\geq N_d+1\}.$$
    Note that $\log N_{d_0}=\left(\frac{1}{2}+o(1)\right)\log N$ as $N\to\infty$, which, in combination with \cref{eq:n-asymptotic}, yields that
    $$d_0=\left(\frac{1}{2}+o(1)\right)\frac{\log N}{\log\log N}$$
    as $N\to\infty$. Set
    $$g_N(\chi)=\chi(\lfloor N/N_{d_0}\rfloor)f_{d_0}(\chi)$$
    and observe that then
    $$S_Ng_N(\chi)=\chi(\lfloor N/N_{d_0}\rfloor) S_{N_{d_0}}f_{d_0}(\chi),$$
    which is easily seen by expanding both sides in terms of their Fourier coefficients and using that an integer $n$ that is divisible by $\lfloor N/N_{d_0}\rfloor$ satisfies $1\leq n\leq N$ if and only if it satisfies $1\leq n\leq N_{d_0}\lfloor N/N_{d_0}\rfloor$. Consequently
    $$\norm{S_Ng_N}_{L^1(\bT^\infty)}=\norm{S_{N_{d_0}}f_{d_0}}_{L^1(\bT^\infty)}\geq\frac{1-\log2}{\pi}d_0=\left(\frac{1-\log2}{2\pi}+o(1)\right)\frac{\log N}{\log\log N}$$
    as $N\to\infty$. As $\norm{g_N}_{L^1(\bT^\infty)}=\norm{f_{d_0}}_{L^1(\bT^\infty)}=1$ and $g_N\in H^1(\bT^\infty)$ (it is an analytic polynomial), the first part of the theorem follows. The second part follows immediately by combining this with the upper bound in \cite{bondarenko_linear_2019}*{Theorem 5.2}.
\end{proof}

\subsection*{Acknowledgments.} I would like to thank my PhD supervisor, Ole Fredrik Brevig, for his insightful comments that helped improve the exposition in the paper.

\subsection*{AI Declaration.}
The initial proof of \cref{theorem:partial-sum}, including \cref{lemma:entropy-inequality}, was found by OpenAI's GPT-6 Astra after the author asked it about the asymptotic order of the partial sum on $\sH^1$. This proof was subsequently modified by the author to improve presentation and readability; however, the core of the argument remains the same. The final version of the paper is entirely written by the author, and the author takes full responsibility for its content.

\bibliography{references}

\end{document}